\newtheorem{lemma}{Lemma}
\newtheorem{theorem}{Theorem} 
\newtheorem{corollary}{Corollary} 
\newtheorem{definition}{Definition}
\newtheorem{observation}{Observation}
\newcommand{\F}{\mathcal{F}}
\newcommand{\h}{\mathcal{H}}
\newcommand{\abs}[1]{\left\lvert{#1}\right\rvert}
\newcommand{\floor}[1]{\left\lfloor{#1}\right\rfloor}
\newcommand{\ceil}[1]{\left\lceil{#1}\right\rceil}
\title{On the maximum size of connected hypergraphs without a path of given length}
\author
{
Ervin Gy\H{o}ri \thanks{Alfr\'ed R\'enyi Institute of Mathematics, Hungarian Academy of Sciences.  e-mail: gyori.ervin@renyi.mta.hu}
\and
Abhishek Methuku \thanks{ Central European University, Budapest. e-mail: abhishekmethuku@gmail.com}
\and
Nika Salia \thanks{ Central European University, Budapest. e-mail: Nika\char`_Salia@phd.ceu.edu}
\and 
Casey Tompkins  \thanks{Alfr\'ed R\'enyi Institute of Mathematics, Hungarian Academy of Sciences. e-mail: ctompkins496@gmail.com}
\and
M\'at\'e Vizer \thanks{Alfr\'ed R\'enyi Institute of Mathematics, Hungarian Academy of Sciences. e-mail: vizermate@gmail.com.}
}
\begin{document}
\maketitle
\begin{abstract}
In this note we asymptotically determine the maximum number of hyperedges possible in an $r$-uniform, connected $n$-vertex hypergraph without a Berge path of length $k$, as $n$ and $k$ tend to infinity.  We show that, unlike in the graph case, the multiplicative constant is smaller with the assumption of connectivity.   
\end{abstract}

\section{Introduction}
Let $P_k$ denote a path consisting of $k$ edges in a graph $G$.  There are several notions of paths in hypergraphs the most basic of which is due to Berge.  A Berge path of length $k$ is a set of $k+1$ distinct vertices $v_1,v_2,\dots,v_{k+1}$ and $k$ distinct hyperedges $h_1, h_2, \dots, h_k$ such that for $1 \le i \le k$, $v_i,v_{i+1} \in h_i$.  A Berge path is also denoted simply as $P_k$, and the vertices $v_i$ are called basic vertices.  If $v_1=v$ and $v_{k+1}=w$, then we call the Berge path a Berge $v$-$w$-path.    A hypergraph $\h$ is called connected if for any $v \in V(\h)$ and $w\in V(\h)$ there is a Berge $v$-$w$-path. Let $N_s(G)$ denote the number of $s$-vertex cliques in the graph $G$.

\vspace{2mm}

A classical result of Erd\H{o}s and Gallai \cite{erdHos1959maximal} asserts that
\begin{theorem}[Erd\H{o}s-Gallai]
\label{EGpaths}
Let $G$ be a graph on $n$ vertices not containing $P_k$ as a subgraph, then
\begin{displaymath}
\abs{E(G)} \le \frac{(k-1)n}{2}.
\end{displaymath}
\end{theorem}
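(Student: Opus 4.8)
The plan is to prove Theorem~\ref{EGpaths} by induction on $n$, reducing the whole argument to a single statement about long paths in graphs of large minimum degree. If $n\le k$ the bound is immediate, since $\abs{E(G)}\le\binom{n}{2}=\frac{n(n-1)}{2}\le\frac{n(k-1)}{2}$, so I would assume $n\ge k+1$. For the inductive step I would first ask whether $G$ has a vertex $v$ of degree at most $\floor{(k-1)/2}$. If it does, then $G-v$ is $P_k$-free on $n-1$ vertices, the induction hypothesis gives $\abs{E(G-v)}\le\frac{(k-1)(n-1)}{2}$, and adding $v$ back preserves the bound: $\abs{E(G)}\le\frac{(k-1)(n-1)}{2}+\frac{k-1}{2}=\frac{(k-1)n}{2}$. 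Hence it remains to handle the case $\delta(G)\ge d:=\floor{(k-1)/2}+1$, where one checks $2d\ge k$.

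The key ingredient I would isolate is the following claim: \emph{every connected graph $H$ with $\delta(H)\ge d$ contains a path with at least $\min\{2d,\ \abs{V(H)}-1\}$ edges}. Granting it, I would finish as follows: let $C$ be any connected component of $G$; since a vertex has the same degree in $C$ as in $G$, we have $\delta(C)\ge d$, so $C$ contains a path with at least $\min\{2d,\abs{V(C)}-1\}\ge\min\{k,\abs{V(C)}-1\}$ edges. Since $G$ is $P_k$-free this forces $\abs{V(C)}\le k$, hence $\abs{E(C)}\le\binom{\abs{V(C)}}{2}\le\frac{\abs{V(C)}(k-1)}{2}$; summing over all components gives $\abs{E(G)}\le\frac{(k-1)n}{2}$, closing the induction.

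To prove the claim --- the real content --- I would take a longest path $P=v_0v_1\cdots v_\ell$ in $H$ and suppose for contradiction that $\ell<2d$ and $\ell\le\abs{V(H)}-2$. Maximality of $P$ forces every neighbour of $v_0$ and of $v_\ell$ to lie in $\{v_1,\dots,v_{\ell-1}\}$. If $v_0v_\ell\in E(H)$ then $v_0v_1\cdots v_\ell v_0$ is a cycle on $V(P)$; otherwise the index sets $\{\,i:v_0v_{i+1}\in E(H)\,\}$ and $\{\,i:v_\ell v_i\in E(H)\,\}$ lie in $\{1,\dots,\ell-1\}$ and have sizes at least $d-1$ and $d$, so $\ell<2d$ makes them meet at some $i$, and then $v_0v_1\cdots v_i v_\ell v_{\ell-1}\cdots v_{i+1}v_0$ is again a cycle on $V(P)$ (the standard path-to-cycle argument of Erd\H{o}s and Gallai). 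Either way we obtain a cycle through every $v_0,\dots,v_\ell$; since $\ell\le\abs{V(H)}-2$ some vertex lies off this cycle, and connectivity provides such a vertex adjacent to a cycle vertex, which I splice on to obtain a path with $\ell+1$ edges --- contradicting the maximality of $P$.

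I expect the main obstacle to be the claim, and within it the path-to-cycle step that turns a maximum path into a spanning cycle of its own vertex set; the base case $n\le k$, the minimum-degree dichotomy, and the component-wise summation are all routine once the claim is in hand. It is worth noting that disjoint unions of $K_k$'s show the bound is tight, and that there is also a connected $P_k$-free construction of nearly the same size (a clique on $\floor{(k-1)/2}$ vertices joined completely to an independent set), which is consistent with the abstract's point that, in the graph case, the multiplicative constant $\frac{k-1}{2}$ is not improved by imposing connectivity.
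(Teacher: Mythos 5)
The paper does not actually prove Theorem~\ref{EGpaths}: it quotes it as a classical result and notes only that Erd\H{o}s and Gallai originally obtained it as a corollary of the cycle theorem (Theorem~\ref{EGcycles}). Your argument is a correct, self-contained proof by a genuinely different route: induction on $n$ with a minimum-degree dichotomy, where the hard case $\delta(G)\ge d=\lfloor (k-1)/2\rfloor+1$ is disposed of by the classical lemma that a connected graph of minimum degree $d$ contains a path with at least $\min\{2d,\abs{V(H)}-1\}$ edges. The steps all check out: the base case $n\le k$; the count $\abs{E(G)}\le\frac{(k-1)(n-1)}{2}+\frac{k-1}{2}$ after deleting a vertex of degree at most $\lfloor(k-1)/2\rfloor$; the verification $2d\ge k$; the pigeonhole in the rotation step (index sets of sizes at least $d-1$ and $d$ inside $\{1,\dots,\ell-1\}$, which has only $\ell-1\le 2d-2$ elements, must intersect); and the splicing of an outside vertex onto the resulting spanning cycle of $V(P)$, which contradicts maximality. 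The only cosmetic imprecision is that the first index set $\{i:v_0v_{i+1}\in E(H)\}$ actually lives in $\{0,\dots,\ell-2\}$ and has size $\deg(v_0)\ge d$; your weaker bookkeeping (size $\ge d-1$ after discarding the index $0$) still forces the intersection, so nothing breaks. Compared with the route the paper alludes to, yours avoids proving the stronger cycle theorem at the cost of the connectivity-based path-extension lemma; the cycle-theorem route yields the sharper bound $\frac{(k-1)(n-1)}{2}$ for graphs with no long cycle, which the paper also uses (via Corollary~\ref{luo2}), so the paper needs that statement anyway. Either way, the theorem as invoked in the paper is fully justified by your argument.
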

In fact, Erd\H{o}s and Gallai deduced this result as a corollary of the following stronger result about cycles,
\begin{theorem}[Erd\H{o}s-Gallai]
\label{EGcycles}
Let $G$ be a graph on $n$ vertices with no cycle of length at least $k$, then 
\begin{displaymath}
\abs{E(G)} \le \frac{(k-1)(n-1)}{2}.
\end{displaymath}
\end{theorem}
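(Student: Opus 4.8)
The plan is to deduce the statement from the corresponding bound for \emph{connected} graphs and then to prove that connected bound by induction on the number of vertices, the only real input being a lower bound on the circumference of a $2$-connected graph (we may assume $k \ge 3$). Concretely, it suffices to show that every connected graph $H$ on $m$ vertices with no cycle of length at least $k$ satisfies $\abs{E(H)} \le \frac{(k-1)(m-1)}{2}$: if $G$ has components $G_1,\dots,G_c$ with $\abs{V(G_i)} = n_i$, summing this over the components yields $\abs{E(G)} \le \frac{k-1}{2}\sum_i (n_i - 1) = \frac{(k-1)(n-c)}{2} \le \frac{(k-1)(n-1)}{2}$.

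\textbf{Induction on $m$.} Let $H$ be connected on $m$ vertices with no cycle of length $\ge k$. If $m \le k-1$, then $\abs{E(H)} \le \binom{m}{2} = \frac{m(m-1)}{2} \le \frac{(k-1)(m-1)}{2}$, so assume $m \ge k$. If some vertex $v$ has $d(v) \le \frac{k-1}{2}$, I delete it: each component of $H - v$ is connected, has fewer than $m$ vertices, and has no cycle of length $\ge k$, and since $H$ is connected the number $t$ of these components satisfies $1 \le t \le d(v)$. Applying the inductive hypothesis to each component and summing, $\abs{E(H-v)} \le \frac{(k-1)(m-1-t)}{2}$, so $\abs{E(H)} \le \frac{(k-1)(m-1-t)}{2} + \frac{k-1}{2} = \frac{(k-1)(m-t)}{2} \le \frac{(k-1)(m-1)}{2}$.

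\textbf{Large minimum degree.} Otherwise $\delta(H) > \frac{k-1}{2}$, hence $2\delta(H) \ge k$. If $H$ has a cut vertex $v$, write $H = H_1 \cup H_2$ with $H_1, H_2$ connected, sharing only $v$, and having $m_i$ vertices where $2 \le m_i < m$ and $m_1 + m_2 = m+1$; by induction $\abs{E(H)} = \abs{E(H_1)} + \abs{E(H_2)} \le \frac{(k-1)(m_1-1)}{2} + \frac{(k-1)(m_2-1)}{2} = \frac{(k-1)(m-1)}{2}$. If instead $H$ is $2$-connected, then by a classical theorem of Dirac it contains a cycle of length at least $\min(m, 2\delta(H)) \ge \min(m, k) = k$, contradicting the hypothesis; so this case is vacuous. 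This completes the induction.

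\textbf{Main obstacle.} The one genuinely non-routine ingredient is the $2$-connected case, i.e.\ Dirac's bound that a $2$-connected graph has circumference at least $\min(n, 2\delta)$. If one does not wish to quote it, the standard substitute is P\'osa's rotation--extension argument applied to a longest path (or longest cycle) of $H$, and that argument is where essentially all of the difficulty lies; the remaining steps — the bookkeeping of components when a low-degree vertex is removed, and the edge count across a cut vertex — are straightforward.
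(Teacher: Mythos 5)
The paper does not prove this statement at all; it is quoted as a classical result of Erd\H{o}s and Gallai with a citation, so there is no in-paper proof to compare against. Judged on its own, your argument is correct and is essentially the standard modern proof of the Erd\H{o}s--Gallai cycle theorem: reduce to connected components, induct on the number of vertices, delete a vertex of degree at most $\frac{k-1}{2}$ when one exists (your bookkeeping $\abs{E(H)} \le \frac{(k-1)(m-t)}{2} \le \frac{(k-1)(m-1)}{2}$ using $t\ge 1$ is right), split at a cut vertex otherwise, and kill the $2$-connected case with Dirac's circumference bound $\min(m,2\delta)\ge \min(m,k)=k$. Appealing to Dirac's theorem is legitimate and not circular: it is an independent (and earlier, 1952) result, and if one insists on self-containment the P\'osa rotation--extension argument you mention fills that hole. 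Two minor points worth making explicit: the restriction $k\ge 3$ is genuinely necessary (for $k=2$ a path on three vertices already violates the claimed bound, and in a simple graph ``no cycle of length at least $k$'' for $k\le 3$ just means $H$ is a forest, which your base case and Case A already cover since then every tree has a leaf); and in Case A one should note that every component of $H-v$ meets $N(v)$, which is exactly why $1\le t\le d(v)$ — you state this, so the step is sound.
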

Kopylov \cite{kopylov1977maximal} and later  Balister, Gy\H{o}ri, Lehel and Schelp \cite{balister2008connected} determined the maximum number of edges possible in a connected $P_k$-free graph. 

\begin{theorem}
\label{connEG}
Let $G$ be a connected $n$-vertex graph with no $P_k$, $n>k \ge 3$. Then $\abs{E(G)}$ is bounded above by $$\max \{ \binom{k-1}{2} + n-k+1, \binom{\ceil{\frac{k+1}{2}}}{2}+\floor{\frac{k-1}{2}}(n-\ceil{\frac{k+1}{2}}) \}.$$
\end{theorem}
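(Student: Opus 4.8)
The plan is to prove the lower bound by exhibiting two extremal graphs and the upper bound by reducing to the $2$-connected case, which I would handle with the disintegration technique. Both terms of the maximum are attained. For the first, let $G_1$ be a clique on a set $S$ of $k-1$ vertices together with $n-k+1$ extra vertices, each joined only to a single fixed $v\in S$. Then $\abs{E(G_1)}=\binom{k-1}{2}+n-k+1$, and a longest path either stays inside the clique (at most $k-2$ edges) or leaves it through $v$ to one pendant vertex (at most $k-1$ edges), so $G_1$ contains no $P_k$. For the second, put $a:=\floor{\frac{k-1}{2}}$, note $k-a=\ceil{\frac{k+1}{2}}$, and let $G_2$ consist of a clique on a set $S$ of $k-a$ vertices, a fixed $A\subseteq S$ with $\abs{A}=a$, and $n-k+a$ extra vertices each adjacent to every vertex of $A$ and to nothing else. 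Then $\abs{E(G_2)}=\binom{k-a}{2}+a(n-k+a)$, and a short case analysis of how any path must repeatedly pass through the $a$ vertices of $A$ shows its longest path has exactly $k-1$ edges.

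\emph{The $2$-connected case.} Write $g(m,j):=\binom{k-j}{2}+j(m-k+j)$, so the theorem claims $\abs{E(G)}\le\max\{g(n,1),g(n,a)\}$. Since $j\mapsto g(m,j)$ is strictly convex, $\max_{1\le j\le a}g(m,j)=\max\{g(m,1),g(m,a)\}$; hence it suffices to bound $\abs{E(G)}$ by $g(n,j)$ for \emph{some} $j\in\{1,\dots,a\}$ that may depend on $G$. For $2$-connected $G$ I would run the $\alpha$-disintegration: repeatedly delete a vertex of current degree at most $\alpha$, ending with a core $D$; each deletion removes at most $\alpha$ edges, so $\abs{E(G)}\le\alpha(n-\abs{V(D)})+\abs{E(D)}$. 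The plan is to show that for the correct choice of $\alpha$ --- the two natural candidates are $\alpha=2$ and $\alpha=a$ --- the core $D$ is either empty or a clique on at most $k-\alpha$ vertices, in which case the displayed bound collapses to $\abs{E(G)}\le g(n,\alpha)$; the empty-core case is handled by carefully accounting for the last, clique-like stages of the deletion instead of using the crude estimate. Taking whichever of the two values of $\alpha$ applies to $G$ yields Kopylov's bound, which by convexity lies within the claimed form.

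\emph{Reducing to the $2$-connected case.} If $G$ is connected but has a cut vertex, I would induct on the number of blocks. Pick an end-block $B$ that meets the rest of $G$ only at a cut vertex $v$. Since $G$ has a vertex outside $B$, the vertex $v$ has a neighbour outside $B$, so any path of length $\ell$ inside $B$ with endpoint $v$ extends to a path of length $\ell+1$ in $G$; hence $B$ has no $v$-rooted path with $k-1$ edges. If $B$ is a single edge this is the ordinary leaf-deletion step, lowering $n$ by one. Otherwise $B$ is $2$-connected, a rooted variant of Kopylov's estimate bounds $\abs{E(B)}$ using this restriction on $v$-rooted paths, and $G-(V(B)\setminus\{v\})$ is again connected and $P_k$-free on fewer vertices; summing the two bounds and using the convexity observation to keep the answer in the two-term form closes the induction. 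The first extremal term $g(n,1)$ arises exactly in the degenerate ``tree of tiny blocks'' regime modelled by $G_1$.

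\emph{The main obstacle.} The delicate point is the structural claim inside the disintegration: that once all low-degree vertices are deleted, what remains is a clique on at most $k-\alpha$ vertices (or is empty). I would argue by contradiction --- from a core with minimum degree exceeding $\alpha$ that is either too large or not complete, together with enough of the already-deleted vertices and the $2$-connectivity of $G$ (or, in the block step, its connectivity), one assembles a path with $k$ edges. Carrying out this path-extraction uniformly at the boundary --- $n$ close to $k$, the parity of $k$, the empty-core case, and the crossover between the two extremal configurations --- is where the argument needs the most care.
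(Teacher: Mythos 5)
The paper does not prove Theorem~\ref{connEG} at all: it is quoted as a known result of Kopylov and of Balister, Gy\H{o}ri, Lehel and Schelp, so there is no in-paper argument to compare yours against. Judged on its own, your proposal correctly identifies the historically correct route --- the two extremal configurations $H_{n,k,1}$ and $H_{n,k,\floor{(k-1)/2}}$, the convexity of $g(m,j)$ in $j$ so that only the endpoint values matter, Kopylov's $\alpha$-disintegration for the $2$-connected case, and a block induction for the merely connected case --- and your verification that $G_1$ and $G_2$ are $P_k$-free is essentially right (for $G_2$ one must note that a path can meet $B$ in at most $a+1$ vertices, and that if it meets $B$ in exactly $a+1$ vertices it cannot also use $C$, which is what keeps the vertex count at $k$ rather than $k+1$).

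However, as written this is a plan rather than a proof, and the two statements that carry all the difficulty are asserted, not established. First, the structural claim for the disintegration is not quite what is true and is left unproved: the core $D$ need not be a clique; what Kopylov's argument actually delivers is the bound $\abs{V(D)}\le k-\alpha$ (after which one uses $\abs{E(D)}\le\binom{\abs{V(D)}}{2}$), and proving that bound requires a separate lemma of Dirac/Erd\H{o}s--Gallai type guaranteeing long cycles or long rooted paths in $2$-connected graphs of large minimum degree. Moreover the function $t\mapsto\alpha(n-t)+\binom{t}{2}$ is maximized on $[0,k-\alpha]$ at one of its endpoints, and the value at $t=0$, namely $\alpha n$, exceeds $g(n,\alpha)$ when $\alpha$ is close to $k/2$; so the empty-core case is not a boundary nuisance but a genuine second regime, and you defer exactly the accounting that handles it. Second, the block induction leans on ``a rooted variant of Kopylov's estimate'' bounding $\abs{E(B)}$ for a $2$-connected block $B$ with no long path ending at the cut vertex $v$; this rooted version is itself a nontrivial theorem requiring its own disintegration argument, and the subsequent summation over blocks needs superadditivity inequalities to stay within the two-term form. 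Until those two lemmas are stated precisely and proved, the argument is an outline of Kopylov's proof rather than a proof.
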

Observe that, although the upper bound is lower in the connected case, it is nonetheless the same asymptotically. Balister, Gy\H{o}ri, Lehel and Schelp also determined the extremal cases.   
\begin{definition}
The graph $H_{n,k,a}$ consists of 3 disjoint vertex sets $A,B,C$ with $\abs{A} = a$, $\abs{B} = n-k+a$ and $\abs{C} = k-2a$. $H_{n,k,a}$ contains all edges in $A \cup C$ and all edges between $A$ and $B$. $B$ is taken to be an independent set.  The number of $s$-cliques in this graph is
\begin{displaymath}
f_s(n,k,a) = \binom{k-a}{s} + (n-k+a)\binom{a}{s-1}.
\end{displaymath}
\end{definition}
The upper bound of Theorem \ref{connEG} is attained for the graph $H_{n,k,1}$ or $H_{n,k,\floor{\frac{k-1}{2}}}$.

We now mention some recent results of Luo \cite{luo2017maximum} which will be essential in our proof.  
\begin{theorem}[Luo]
\label{luo1}
Let $n-1 \ge k \ge 4$.  Let $G$ be a connected $n$-vertex graph with no $P_k$, then the number of $s$-cliques in $G$ is at most
\begin{displaymath}
\max\{f_s(n,k,\floor{(k-1)/2}), f_s(n,k,1)\}.
\end{displaymath}
\end{theorem}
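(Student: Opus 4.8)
The plan is to induct on the clique size $s$. The cases $s\le 2$ are immediate: $s=1$ is the identity $N_1(G)=n=f_1(n,k,a)$, and $s=2$ is precisely Theorem~\ref{connEG}, since $f_2(n,k,1)=\binom{k-1}{2}+n-k+1$ and $f_2(n,k,\lfloor(k-1)/2\rfloor)=\binom{\lceil(k+1)/2\rceil}{2}+\lfloor(k-1)/2\rfloor(n-\lceil(k+1)/2\rceil)$ are exactly the two terms appearing there. So fix $s\ge 3$ and assume the bound for $s-1$, for all admissible $n$ and $k$. The inductive step is built on the identity
\[
s\cdot N_s(G)=\sum_{v\in V(G)} N_{s-1}\big(G[N(v)]\big),
\]
which records that an $s$-clique through $v$ is exactly an $(s-1)$-clique inside the neighbourhood $N(v)$; so it suffices to bound $N_{s-1}(G[N(v)])$ well for every $v$.

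First I would record the structure of the neighbourhood subgraphs $G_v:=G[N(v)]$. Each $G_v$ is $P_{k-1}$-free (a $P_{k-1}$ inside $N(v)$ extends through $v$ to a $P_k$ in $G$) and contains no $K_{k-1}$ (a $K_{k-1}$ in $N(v)$ together with $v$ is a $K_k$ in $G$, but a connected graph on $n\ge k+1$ vertices containing $K_k$ contains $P_k$: take a Hamilton path of the $K_k$ starting at a vertex with a neighbour outside and prepend that neighbour). Decompose $G_v$ into connected components $C^v_1,\dots,C^v_{q_v}$. The crucial point is that for any two of these components the sum of their longest-path lengths is at most $k-2$: a longest path in $C^v_i$, then $v$, then a longest path in $C^v_j$ is a path in $G$. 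Hence at most one component, say $C^v_1$, has longest path exceeding $\lfloor(k-2)/2\rfloor$, and every other component is then $P_{\lfloor k/2\rfloor}$-free and much sparser. Applying the induction hypothesis to each $C^v_i$ --- which is connected and $P_{\ell_i+1}$-free, where $\ell_i$ is its longest-path length, with the trivial bound $N_{s-1}(C^v_i)\le\binom{|C^v_i|}{s-1}$ when a component is too small for the hypothesis to apply, and using the absence of $K_{k-1}$ to keep that trivial term under control --- yields an upper bound on $N_{s-1}(G_v)$ as a function of $|C^v_1|$ and $\sum_{i\ge 2}|C^v_i|$, governed by a ``local parameter'' $a_v\in\{1,\dots,\lfloor(k-1)/2\rfloor\}$.

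The main obstacle is the ensuing optimisation: summing these per-vertex bounds over all $v$ and showing the total is at most $s\cdot\max\{f_s(n,k,1),f_s(n,k,\lfloor(k-1)/2\rfloor)\}$. Two features make this the heart of the argument. First, $f_s(\cdot,k,a)$ is convex in $a$ (a sum of convex pieces in $a$), so the aggregated bound is in effect a convex combination over the $a_v$ and must be pushed to one of the two extreme values $a=1$ or $a=\lfloor(k-1)/2\rfloor$ --- the graphs $H_{n,k,1}$ and $H_{n,k,\lfloor(k-1)/2\rfloor}$, for which the induction is tight; carrying this through, together with the degenerate small-component cases, is most of the work. Second, bounding each $G_v$ merely by the extremal $P_{k-1}$-free graph on $|N(v)|$ vertices is not quite enough, so one must exploit that neighbourhoods of adjacent vertices overlap heavily --- equivalently, reorganise the count around the dense ``core'' of $G$. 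An alternative route that avoids the induction on $s$ is to first reduce to the $2$-connected case via the block decomposition (a cut vertex splits $G$ into two connected $P_k$-free pieces, and for $s\ge 2$ their clique counts simply add, with vertex counts summing to $n+1$), and then prove the $2$-connected analogue of Kopylov's theorem by disintegration: repeatedly delete vertices of degree less than $\lceil k/2\rceil$, each deletion destroying at most $\binom{\lfloor(k-1)/2\rfloor}{s-1}$ cliques, until one reaches a subgraph that is either small, bounded directly, or has minimum degree at least $\lceil k/2\rceil$ and hence, being $2$-connected, a cycle of length at least $k$ by Dirac's theorem, contradicting $P_k$-freeness. Also there, the real work is to track the constants precisely enough to land exactly on $\max\{f_s(n,k,1),f_s(n,k,\lfloor(k-1)/2\rfloor)\}$, and to handle the small dense residue and the core's possible lack of $2$-connectivity.
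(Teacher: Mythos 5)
First, a point of comparison: the paper does not prove Theorem~\ref{luo1} at all --- it is quoted from Luo \cite{luo2017maximum} and used as a black box --- so the only question is whether your argument stands on its own. It does not yet: it is a plan with the decisive steps explicitly deferred. In your first route, the identity $s\cdot N_s(G)=\sum_{v}N_{s-1}(G[N(v)])$ and the structural observations about $G_v=G[N(v)]$ (that it is $P_{k-1}$-free, contains no $K_{k-1}$, and has at most one component with a long path) are all correct. But the aggregation step is the theorem, and it is not merely ``most of the work'' --- done naively it fails. Bounding each $N_{s-1}(G_v)$ by the extremal value for a $P_{k-1}$-free graph on $|N(v)|$ vertices and summing over $v$ overcounts (as you yourself note), and the assertion that the total is ``in effect a convex combination over the $a_v$'' of values $f_s(n,k,a)$ is unsupported: the per-vertex bounds depend on $|N(v)|$ and on the component structure of each $G_v$, quantities not controlled by $n$ and $k$ alone, and no mechanism is given that ties $\sum_v N_{s-1}(G_v)$ to $s\cdot f_s(n,k,a)$ for a single global $a$. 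Convexity of $a\mapsto f_s(n,k,a)$ is true but does not by itself produce such a mechanism.

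Your second route is the correct strategy --- it is, in outline, Kopylov's method as extended by Luo: reduce to the $2$-connected case via the block decomposition, pass from paths to long cycles, and perform the $\lceil k/2\rceil$-disintegration, charging at most $\binom{\lfloor (k-1)/2\rfloor}{s-1}$ cliques of size $s$ to each deleted vertex. But every step that makes the exact constant come out is again left undone: the superadditivity of $\max\{f_s(\cdot,k,1),f_s(\cdot,k,\lfloor(k-1)/2\rfloor)\}$ under gluing blocks at a cut vertex must be verified; the core remaining after disintegration need not be $2$-connected, so Dirac's theorem cannot be applied to it directly (handling this is the substantive part of Kopylov's argument, not a footnote); and the ``small dense residue'' case is precisely where the competing term $f_s(n,k,1)$ arises and must be bounded with the exact binomial coefficients. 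Since each of these is named but not executed, the proposal identifies the right road map (the second one) without constituting a proof of the statement.
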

\noindent As a corollary, she also showed
\begin{corollary}[Luo]
\label{luo2}
Let $n \ge k \ge 3$.  Assume that $G$ is an $n$-vertex graph with no cycle of length $k$ or more, then
\begin{displaymath}
N_s(G) \le \frac{n-1}{k-2}\binom{k-1}{s}.
\end{displaymath}
\end{corollary}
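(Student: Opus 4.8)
The plan is to induct on $n$, using the block (maximal $2$-connected component) decomposition to strip away the connectivity hypothesis, with Theorem~\ref{luo1} ---or, as I explain below, its $2$-connected refinement--- as the engine. One may assume $s\ge 2$ throughout, since for $s=1$ the claim is just $n\le (n-1)(k-1)/(k-2)$, which holds because $n\ge k$. A handy elementary fact, checked by direct computation, is that $m\mapsto\binom{m}{s}/(m-1)$ is nondecreasing for $m\ge\max(s,2)$; hence any graph $H$ on $n'\le k-1$ vertices already satisfies $N_s(H)\le\binom{n'}{s}\le\frac{n'-1}{k-2}\binom{k-1}{s}$, which disposes of all ``small'' pieces and provides the base of the induction.

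First I would reduce to the connected case: if $G$ is disconnected, apply the inductive hypothesis (or, on small pieces, the elementary bound) to each component $C_j$ and sum, using $\sum_j(|V(C_j)|-1)=n-(\#\text{components})\le n-1$. Then reduce to the $2$-connected case: every $s$-clique with $s\ge 2$ is contained in a single block (it is an edge or a $2$-connected subgraph), so $N_s(G)=\sum_i N_s(B_i)$ over the blocks $B_1,\dots,B_m$ of $G$, and each $B_i$, being a subgraph of $G$, has no cycle of length $\ge k$. If $G$ is not $2$-connected then $m\ge 2$ and $n_i:=|V(B_i)|<n$ for every $i$, so by the inductive hypothesis (when $n_i\ge k$) or the elementary bound (when $n_i\le k-1$) we get $N_s(B_i)\le\frac{n_i-1}{k-2}\binom{k-1}{s}$; summing and using the block--cut-tree identity $\sum_i(n_i-1)=n-1$ finishes the reduction. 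What remains is to show: \emph{if $G$ is $2$-connected with $n\ge k$ and no cycle of length $\ge k$, then $N_s(G)\le\frac{n-1}{k-2}\binom{k-1}{s}$.}

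This $2$-connected case is the \emph{main obstacle}, and here one cannot simply invoke Theorem~\ref{luo1}, because a $2$-connected graph with no cycle of length $\ge k$ may still contain a path with $k$ edges: for large $k$, the theta graph on $3t-1$ vertices formed by three internally disjoint $u$--$v$ paths of length $t=\floor{(k-1)/2}$ has longest cycle of length $2t\le k-1$ yet contains a Hamilton path with $3t-2\ge k$ edges. The right tool is the $2$-connected, clique analogue of Kopylov's circumference theorem: such a $G$ satisfies $N_s(G)\le\max\{f_s(n,k,2),\,f_s(n,k,\floor{(k-1)/2})\}$, with $f_s$ as in the definition of $H_{n,k,a}$ and with extremal graphs $H_{n,k,2}$ and $H_{n,k,\floor{(k-1)/2}}$. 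I would prove this by fixing a longest cycle $C$ (of length $c\le k-1$), analyzing how the components of $G-V(C)$ attach to $C$---maximality of $C$, via a P\'osa-type rotation/rerouting argument in the style of Kopylov and Woodall, forbids large attachments---and then bounding the clique count by localizing each clique to $C$ together with a bounded ``ear'', perhaps via a secondary induction; this is where the real work lies. (For $k\le4$ this case is vacuous, since a $2$-connected graph on at least $4$ vertices already has a cycle of length at least $4$.)

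Finally, granting the $2$-connected bound, it remains to verify the arithmetic inequality $\max\{f_s(n,k,2),\,f_s(n,k,\floor{(k-1)/2})\}\le\frac{n-1}{k-2}\binom{k-1}{s}$ for $n\ge k$, $s\ge2$, $k\ge5$. Since $f_s(n,k,a)=\binom{k-a}{s}+(n-k+a)\binom{a}{s-1}$ is affine in $n$, it suffices to check it at $n=k$ and to compare the coefficients of $n$: the left-hand coefficient $\binom{a}{s-1}$ is at most the right-hand coefficient $\binom{k-1}{s}/(k-2)$ for $a\in\{2,\floor{(k-1)/2}\}$ (an elementary binomial estimate, essentially tight for $s=2$), while the value at $n=k$ follows from a short computation. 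Assembling these steps yields the bound for every $G$.
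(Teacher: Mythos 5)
The paper does not prove this statement at all---it is quoted from Luo's paper as a known result---so there is no internal proof to compare against; I can only assess your argument on its own terms. Your reductions are correct and in fact follow the route Luo herself takes: the $s=1$ case and the elementary bound $\binom{n'}{s}\le\frac{n'-1}{k-2}\binom{k-1}{s}$ for $n'\le k-1$ check out (the monotonicity of $\binom{m}{s}/(m-1)$ does require $s\ge2$, which you correctly isolate), the component and block decompositions with $\sum_i(n_i-1)\le n-1$ are sound, every $s$-clique with $s\ge2$ does lie in a single block, and your observation that Theorem~\ref{luo1} cannot be applied directly to a $2$-connected block (the theta-graph example) is exactly right and is the key structural point. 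The final affine-in-$n$ comparison of $f_s(n,k,a)$ against $\frac{n-1}{k-2}\binom{k-1}{s}$ also works for $k\ge5$, $s\ge2$.

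The genuine gap is that the entire difficulty of the result is concentrated in the step you explicitly defer: the bound $N_s(G)\le\max\{f_s(n,k,2),\,f_s(n,k,\floor{(k-1)/2})\}$ for $2$-connected graphs with no cycle of length at least $k$. This is not a lemma one can wave at---it \emph{is} the main theorem of Luo's paper, a clique-counting generalization of Kopylov's circumference theorem, and its proof does not go by the "longest cycle plus attachments plus bounded ear" localization you sketch. Luo's (and Kopylov's) argument instead runs by iterated deletion of low-degree vertices and edge contraction down to a bounded core, together with a careful analysis of how cliques are destroyed at each step; a P\'osa rotation argument alone controls the circumference, not the number of $s$-cliques. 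So as written, your proposal is a correct reduction of the corollary to an unproved theorem that carries essentially all of the content. If you are permitted to cite the $2$-connected theorem (as the paper implicitly permits by citing the corollary itself), your derivation is fine; as a self-contained proof it is incomplete at its central step.
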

\noindent

\noindent
Gy\H{o}ri, Katona and Lemons \cite{gyHori2016hypergraph} initiated the study of Berge $P_k$-free hypergraphs.  They proved
\begin{theorem}[Gy\H{o}ri-Katona-Lemons]
\label{HGEG}
Let $\h$ be an $r$-uniform hypergraph with no Berge path of length $k$.  If $k>r+1>3$, we have
\begin{displaymath}
\abs{E(\h)} \le \frac{n}{k} \binom{k}{r}.
\end{displaymath}
If $r \ge k>2$, we have
\begin{displaymath}
\abs{E(\h)} \le \frac{n(k-1)}{r+1}.
\end{displaymath}
\end{theorem}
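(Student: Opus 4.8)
\medskip
\noindent\textbf{Proof idea.}
In both ranges I would first reduce to a single connected hypergraph: each claimed bound is linear in $n$ and being Berge-$P_k$-free passes to sub-hypergraphs, so if $\h$ is the disjoint union of connected pieces $C_1,\dots,C_t$ on $n_1,\dots,n_t$ vertices it suffices to bound $\abs{E(C_i)}$ in terms of $n_i$ and sum over $i$. A piece on at most $k$ vertices is free: it cannot contain a Berge $P_k$ at all (that requires $k+1$ distinct basic vertices), while $\binom{m}{r}\le\frac{m}{k}\binom{k}{r}$ for $m\le k$ since $\binom{m}{r}/m=\frac1r\binom{m-1}{r-1}$ is nondecreasing, and in the range $r\ge k$ such a piece carries at most one hyperedge, consistent with $\frac{n(k-1)}{r+1}$. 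So assume $\h$ is connected with $n>k$.

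For the range $k>r+1$ I would induct on $n$, the base cases being the ones just noted. In the inductive step, fix a \emph{longest} Berge path $P=v_1h_1v_2\cdots h_\ell v_{\ell+1}$; as $\h$ has no Berge $P_k$ we get $\ell\le k-1$, so $P$ has at most $k$ basic vertices. A naive minimum-degree deletion is too weak here, because the target per-vertex density $\frac1k\binom kr$ lies a factor $r$ below the average degree $\binom{k-1}{r-1}$ of the extremal example (a disjoint union of complete $r$-uniform hypergraphs on $k$ vertices); the vertex we delete must be located from the structure of $P$. The key tool is a rotation--extension argument in the spirit of Erd\H{o}s--Gallai, lifted to Berge paths: if a vertex $u$ can be the initial basic vertex of some longest Berge path, then every hyperedge through $u$ other than the path-hyperedges is contained in that path's (size-$\le k$) basic-vertex set, since otherwise one prepends a fresh basic vertex and lengthens $P$. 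Tracking the set of all such attainable endpoints through the available rotations, one should obtain a vertex whose removal costs at most $\frac1k\binom kr$ hyperedges once the rerouteable ones are charged to the rest, and then the inductive hypothesis applied to $\h$ with that vertex deleted gives $\abs{E(\h)}\le\frac1k\binom kr+\frac{n-1}{k}\binom kr=\frac nk\binom kr$. (A thematically related route: build a $P_k$-free graph $G$ on $V(\h)$ and injectively send each hyperedge to an $r$-clique of $G$ inside it; the maximum number of $r$-cliques in an $n$-vertex $P_k$-free graph is $\frac nk\binom kr$, achieved by disjoint copies of $K_k$, which follows from the Erd\H{o}s--Gallai theory of paths and cycles, e.g.\ Theorem~\ref{EGcycles} or Theorem~\ref{luo1} applied componentwise.)

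For the range $r\ge k$ the inequality is $(r+1)\abs{E(\h)}\le(k-1)n$, which in particular forces $\abs{E(\h)}<n$, so a separate ``sparse'' argument is required. The governing fact is that a single hyperedge already has $r\ge k$ vertices, so any sufficiently spread-out union of $k$ hyperedges in a connected hypergraph produces a Berge $P_k$; the extremal configuration is a disjoint union of blocks of $r+1$ vertices each carrying $k-1$ hyperedges. I would again take a longest Berge path $P$ (now with $\ell\le k-1\le r$ hyperedges, each of size $r$) and analyze how the remaining hyperedges of a connected $\h$ must attach to $V(P)\cup\bigcup_i h_i$, turning this into an induction on $n$ (or a direct charging of $r+1$ tokens per hyperedge against $k-1$ tokens per vertex) to conclude $(r+1)\abs{E(\h)}\le(k-1)n$.

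The step I expect to be the main obstacle, in both ranges, is the rotation--extension analysis of a longest Berge path: unlike for ordinary paths, a path-hyperedge may stick out of the current path's basic-vertex set, so the ``lengthen'' and ``reroute'' moves interact, and making the resulting bookkeeping exact --- each hyperedge charged exactly once, each vertex charged at most its target density --- is the delicate heart of the argument; for $r\ge k$ one additionally needs the structural input that large uniformity makes long Berge paths unavoidable unless the hyperedge set is essentially a forest.
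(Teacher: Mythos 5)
First, a point of orientation: the paper does not prove Theorem~\ref{HGEG} at all --- it is quoted from Gy\H{o}ri, Katona and Lemons \cite{gyHori2016hypergraph} and used as a black box --- so there is no in-paper argument to compare yours against; your attempt has to stand on its own. The peripheral reductions you actually carry out are fine: splitting into connected components, and disposing of components on at most $k$ vertices via the monotonicity of $\binom{m}{r}/m=\frac1r\binom{m-1}{r-1}$ (and, for $r\ge k$, the observation that such a component carries at most one hyperedge). But the entire content of the theorem is concentrated in the step you defer, and you say so yourself.

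Worse, the one concrete mechanism you propose for the inductive step is provably unable to work as stated. You want a vertex whose deletion costs at most $\frac1k\binom kr$ hyperedges, so that $\abs{E(\h)}\le\frac1k\binom kr+\frac{n-1}{k}\binom kr$. In the extremal configuration --- a disjoint union of complete $r$-uniform hypergraphs on $k$ vertices, which is connected-component-wise Berge-$P_k$-free and meets the bound with equality --- \emph{every} vertex has degree $\binom{k-1}{r-1}=\frac rk\binom kr$, a factor $r$ too large. In a genuine induction on $n$ obtained by deleting one vertex $v$ and applying the hypothesis to $\h-v$, every hyperedge through $v$ vanishes and must be absorbed by the additive term; there is no ``rest'' to which ``rerouteable'' hyperedges can be charged. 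So any correct argument must delete a larger set (the whole basic-vertex set of a longest Berge path, say, or an entire component) and control all hyperedges meeting that set --- and that analysis, the rotation--extension bookkeeping for Berge paths, is exactly the missing heart that you flag as ``the main obstacle.'' The parenthetical alternative has the same status: injectively mapping each hyperedge into an $r$-clique of a single auxiliary $P_k$-free graph is precisely the difficulty, not a shortcut; the present paper's fat/thin-edge machinery is what one is forced into when attempting such a map, and it yields only an approximate, non-injective version with error terms. The $r\ge k$ case is likewise only a plan. In short: correct outer reductions, but the core of both bounds is absent, and the single-vertex deletion scheme cannot be repaired without becoming a different proof.
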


\noindent
The case when $k=r+1$ was settled later \cite{davoodi2016erd}:
\begin{theorem}[Davoodi-Gy\H{o}ri-Methuku-Tompkins]
\label{main}
Let $\h$ be an $n$-vertex $r$-uniform hypergraph.   If $\abs{E(\h)} > n$, then $\h$ contains a Berge path of length at least $r+1$.
\end{theorem}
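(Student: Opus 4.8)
The plan is to prove the (equivalent) contrapositive: every $n$-vertex $r$-uniform hypergraph $\h$ with no Berge path of length $r+1$ satisfies $\abs{E(\h)} \le n$. I would argue by strong induction on $n$. The base case $n \le r+1$ is immediate: a Berge path of length $r+1$ has $r+2$ distinct basic vertices and therefore cannot exist, while trivially $\abs{E(\h)} \le \binom{n}{r} \le n$.

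For the inductive step I would first dispose of the easy configurations. If $\h$ is disconnected, apply the induction hypothesis to each connected component (each has fewer than $n$ vertices) and add up the resulting inequalities. If $\h$ has an isolated vertex, delete it and apply induction. If $\h$ has a vertex $v$ of degree $1$ lying in the edge $h$, delete both $v$ and $h$; the resulting hypergraph is still $r$-uniform (no other edge met $v$), still has no Berge path of length $r+1$, and has $n-1$ vertices and $\abs{E(\h)}-1$ edges, so induction gives $\abs{E(\h)}-1 \le n-1$. Hence we may assume $\h$ is connected, has minimum degree at least $2$, and $n \ge r+2$.

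The heart of the argument is this reduced case: $\h$ connected, $r$-uniform, $\delta(\h) \ge 2$, $n \ge r+2$, with no Berge path of length $r+1$; we must show $\abs{E(\h)} \le n$. I would take a longest Berge path $P = v_0 h_1 v_1 h_2 \cdots h_t v_t$, so $t \le r$, and exploit its maximality. The standard prolongation obstruction says that any edge $e \notin \{h_1,\dots,h_t\}$ meeting an endpoint of $P$ must lie inside the vertex set $V(P)=\{v_0,\dots,v_t\}$, which has only $t+1 \le r+1$ vertices; in particular such an edge forces $t \ge r-1$. Because $\delta(\h) \ge 2$, each endpoint lies in at least two edges, which both produces Berge cycles through an initial or terminal segment of $P$ and, via Pósa-type rotations, yields further longest Berge paths on the same set of basic vertices but with new endpoints, to which the same obstruction applies. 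Pushing this through — and invoking connectivity and the degree reductions already made — should show that $\h$ consists of the bounded piece carried by $V(P)$, with at most $\binom{t+1}{r} \le \binom{r+1}{r} = r+1 \le t+1$ edges, with the remainder of $\h$ hanging off it in a forest-like fashion, so that counting gives $\abs{E(\h)} \le n$.

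I expect the main difficulty to be precisely the borderline regime $t \in \{r-1,r\}$, where $V(P)$ has only $r$ or $r+1$ vertices and a priori every $r$-subset of it could be an edge. There the rotation analysis is most delicate: one must show that any such near-extremal configuration (for instance, $V(P)$ carrying a Berge cycle through all of its vertices together with an extra edge reaching outside) either admits a prolongation after a suitable rotation — contradicting maximality and producing the desired Berge path of length $r+1$ — or else cannot have more than $\abs{V(P)}$ edges attached to it. Getting this case analysis and the associated edge-bookkeeping right is where essentially all of the work lies; the reductions and the base case are routine.
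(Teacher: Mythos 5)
First, a point of order: the paper does not prove this statement. It is quoted as a known theorem from \cite{davoodi2016erd} and used as background, so there is no in-paper proof to compare against; your attempt has to be judged on its own merits.

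Your reductions are fine: the contrapositive, induction on $n$, the base case $n\le r+1$ via $\abs{E(\h)}\le\binom{n}{r}\le n$, summing over components, deleting isolated vertices, and deleting a degree-one vertex together with its unique edge are all correct and standard. The genuine gap is that everything after the reduction is a plan rather than a proof, and the plan as stated has concrete holes. The prolongation obstruction and the P\'osa-type rotations only control hyperedges containing an \emph{endpoint} of some longest Berge path; they say nothing about (i) hyperedges through interior basic vertices, (ii) the up to $r-2$ non-basic vertices of each $h_i$ (note that $V(P)=\{v_0,\dots,v_t\}$ need not contain the $h_i$, so ``the bounded piece carried by $V(P)$'' with at most $\binom{t+1}{r}$ edges does not account for the path edges themselves, nor for anything attached to their non-basic vertices), and (iii) hyperedges far from the path, which connectivity alone does not bound. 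The final picture --- a bounded core with ``the remainder of $\h$ hanging off it in a forest-like fashion'' --- is asserted, not derived, and it sits uneasily with your own reduction to $\delta(\h)\ge 2$: a pendant tree has a leaf of degree one, which you have already deleted, so whatever remains outside the core must be handled differently, and proving that it contributes at most one edge per new vertex is essentially the inequality $\abs{E(\h)}\le n$ you are trying to establish, not a consequence of maximality. (Also, the chain $\binom{t+1}{r}\le r+1\le t+1$ fails for $t=r-1$, though the intended conclusion $\binom{t+1}{r}\le t+1$ still holds there.) You candidly say that essentially all of the work lies in the case analysis for $t\in\{r-1,r\}$; that is accurate, and since that work is not carried out, what you have is a plausible strategy in the spirit of the Gy\H{o}ri--Katona--Lemons longest-path/rotation arguments rather than a proof.
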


Our main result is the asymptotic upper bound for the connected version of Theorem \ref{HGEG}, as $n$ and $k$ tend to infinity.  
\begin{theorem}
\label{main}
Let $\h_{n,k}$ be a largest  $r$-uniform connected $n$-vertex hypergraph with no Berge path of length $k$, then
\begin{displaymath}
\lim_{k \to \infty} \lim_{n \to \infty}  \frac{\abs{E(\h_{n,k})}}{k^{r-1}n} = \frac{1}{2^{r-1}(r-1)!}.
\end{displaymath} 
\end{theorem}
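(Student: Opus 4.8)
The plan is to prove matching bounds $|E(\h_{n,k})| = (1+o(1))\,\frac{k^{r-1}n}{2^{r-1}(r-1)!}$, where the error tends to $0$ as $n\to\infty$ and then $k\to\infty$. For the lower bound I would generalize the extremal graph $H_{n,k,\lfloor(k-1)/2\rfloor}$ of Theorem~\ref{connEG}: fix a set $A$ of $a:=\lfloor(k-1)/2\rfloor$ vertices (take $k\ge 2r-1$ so that $a\ge r-1$), let $B$ be the remaining $n-a$ vertices, and let $\h$ consist of all hyperedges $S\cup\{b\}$ with $S\in\binom{A}{r-1}$ and $b\in B$. This $\h$ is connected -- any two vertices are linked through a common $(r-1)$-subset of $A$ -- and it has no Berge path of length $k$: no two consecutive basic vertices of such a path can lie in $B$, since every hyperedge meets $B$ in exactly one vertex, so at least $\lfloor(k+1)/2\rfloor>a=|A|$ of the $k+1$ basic vertices would have to be distinct vertices of $A$, which is impossible. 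The number of hyperedges is $(n-a)\binom{\lfloor(k-1)/2\rfloor}{r-1}=(1+o(1))\,\frac{k^{r-1}n}{2^{r-1}(r-1)!}$, which handles the lower bound.

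For the upper bound the crux is a reduction to Luo's clique bound. The key lemma I would aim for is: \emph{if $\h$ is an $r$-uniform connected Berge-$P_k$-free hypergraph on $n$ vertices, then there is a connected graph $G$ on $V(\h)$ containing no path of length $k+c_r$, for some constant $c_r=c_r(r)$, whose number of $r$-cliques is at least $|E(\h)|$.} The naive host graph, the $2$-shadow $\partial\h$, does make every hyperedge an $r$-clique, but it can contain arbitrarily long paths and cycles, so it must be replaced. Following the scheme behind Győri--Katona--Lemons' proof of Theorem~\ref{HGEG}, one works with a longest Berge path (or a DFS-type structure) of $\h$, uses Berge-$P_k$-freeness to bound its reach, and selects for each hyperedge a bounded family of shadow pairs to place in $G$ so that every hyperedge still induces an $r$-clique while no path of length more than about $k$ appears; keeping the connectivity of $\h$ throughout produces a connected $G$. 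Distinct hyperedges remain distinct $r$-cliques, so $|E(\h)|\le N_r(G)$.

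Granting this, apply Luo's Theorem~\ref{luo1} with $s=r$ and length parameter $k':=k+c_r$:
\[
|E(\h)|\ \le\ N_r(G)\ \le\ \max\bigl\{\,f_r(n,k',\lfloor(k'-1)/2\rfloor),\ f_r(n,k',1)\,\bigr\}.
\]
Since $f_r(n,k',a)=\binom{k'-a}{r}+(n-k'+a)\binom{a}{r-1}$ and the first summand does not depend on $n$, for $a=\lfloor(k'-1)/2\rfloor$ this maximum equals $(1+o(1))\,n\binom{\lfloor(k'-1)/2\rfloor}{r-1}$ as $n\to\infty$ (the term $a=1$ contributes $\binom{k'-1}{r}$, independent of $n$, when $r\ge 3$, and at most $n+O_{k'}(1)$ when $r=2$, so it is dominated). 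Dividing by $k^{r-1}n$ and letting $n\to\infty$, then using $\binom{\lfloor(k'-1)/2\rfloor}{r-1}=(1+o(1))\frac{(k')^{r-1}}{2^{r-1}(r-1)!}$ with $k'=k+c_r$ and letting $k\to\infty$, we obtain $\limsup$ of the ratio $\le\frac1{2^{r-1}(r-1)!}$, which together with the lower bound yields the claimed limit.

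The main obstacle is the key lemma: extracting from a connected Berge-$P_k$-free hypergraph a \emph{connected} host graph that is essentially $P_k$-free and has enough $r$-cliques. The delicate point is that one must control \emph{path} length, not merely cycle length -- a connected graph with no long cycle can still have long paths, so Corollary~\ref{luo2} alone yields only the non-connected bound $\frac1{r!}k^{r-1}n$, which for $r\ge 3$ is strictly larger than the target -- and this path-length control must be achieved while losing only an additive $O_r(1)$, since any multiplicative slack in $k$ would spoil the constant $\frac1{2^{r-1}(r-1)!}$.
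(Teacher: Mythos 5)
Your lower bound is exactly the paper's construction and is fine. The upper bound, however, rests entirely on the unproved ``key lemma,'' and in the strong form you actually use it (a connected graph $G$ with no $P_{k+c_r}$ in which every hyperedge of $\h$ induces an $r$-clique) it is false. If every hyperedge is to induce a clique in $G$, then $G$ must contain the entire shadow $\partial\h$, and the shadow of a Berge-$P_k$-free hypergraph can contain paths of length about $2k$: take the loose path $h_i=\{v_{2i-1},v_{2i},v_{2i+1}\}$ for $i=1,\dots,k-1$; its longest Berge path has length $k-1$ (a Berge path of length $\ell$ needs $\ell$ distinct hyperedges), yet its shadow contains the path $v_1v_2\cdots v_{2k-1}$ of length $2k-2$. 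So no additive constant $c_r$ can work, and as you yourself observe, a multiplicative factor of $2$ in the path length would destroy the constant $\frac{1}{2^{r-1}(r-1)!}$. The weaker reading of your lemma (merely that \emph{some} such $G$ has $N_r(G)\ge|E(\h)|$) is essentially a restatement of the upper bound, so the proposal proves nothing beyond the lower bound.

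It is worth contrasting this with what the paper actually does, since it shows which ideas are missing. The paper classifies shadow edges as \emph{fat} (contained in at least $k$ hyperedges, or at least $2$ when $r=3$) or \emph{thin}, shows via Erd\H{o}s--Gallai that the hyperedges containing a thin edge number only $O(k^2n)$ --- a lower-order term that is simply discarded, and which is exactly what absorbs examples like the loose path above --- and proves that the fat graph $F$ contains no $P_k$ by greedily selecting distinct hyperedges along a would-be path. The surviving hyperedges are then counted by $r$-cliques of $F$. A second obstacle you would also have to confront is that $F$ need not be connected, so Luo's path theorem cannot be applied to it wholesale: the paper uses connectivity of $\h$ to show that at most one component of $F$ contains a cycle of length at least $k/2+1$, applies Luo's cycle corollary with parameter $k/2$ to all other components (which there gives an even smaller constant), and applies the path bound only to the single exceptional component. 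Both the fat/thin dichotomy and the disjoint-long-cycles argument are absent from your proposal, and they constitute the substance of the proof.
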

A construction yielding the bound in Theorem \ref{main} is given by partitioning an $n$-vertex set into two classes $A$, of size $\floor{ \frac{k-1}{2}}$, and $B$, of size $n -\floor{ \frac{k-1}{2}}$ and taking $X \cup \{y\}$ as a hyperedge for every $(r-1)$-element subset $X$ of $A$ and every element $y \in B$. This hypergraph has no Berge $P_k$ as we could have at most $\floor{ \frac{k-1}{2}}$ basic vertices in $A$ and $\floor{ \frac{k-1}{2}} +1$ basic vertices in $B$, thus yielding less than the required $k+1$ basic vertices.  

Observe that in Theorem \ref{HGEG} the corresponding limiting value of the constant factor is $\frac{1}{r!}$ which is $\frac{2^{r-1}}{r}$ times larger than in the connected case. 
Note that the ideas of the proof of Theorem \ref{main} can be used to prove that the limiting value of the constant factor in Theorem \ref{HGEG} is $\frac{1}{r!}$.
\section{Proof of Theorem \ref{main}}

We will use the following simple corollary of Theorem \ref{luo1}. 
\begin{corollary}
\label{Luocor}
Let $G$ be a connected graph on $n$ vertices with no $P_k$, then $G$ has at most $$\frac{k^{r-1} n}{2^{r-1}(r-1)!}$$ $r$-cliques if $n \ge c_{k,r}$ for some constant $c_{k,r}$ depending only on $k$ and $r$.
\end{corollary}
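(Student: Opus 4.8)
The plan is to deduce Corollary~\ref{Luocor} directly from Luo's Theorem~\ref{luo1} by showing that both terms in the maximum there, $f_s(n,k,\floor{(k-1)/2})$ and $f_s(n,k,1)$ with $s=r$, are bounded above by $\frac{k^{r-1}n}{2^{r-1}(r-1)!}$ once $n$ is sufficiently large in terms of $k$ and $r$. Recall that $f_r(n,k,a) = \binom{k-a}{r} + (n-k+a)\binom{a}{r-1}$; as a function of $n$ this is linear with slope $\binom{a}{r-1}$, so the dominant term for large $n$ is $n\binom{a}{r-1}$, and the additive constant $\binom{k-a}{r}-\big(k-a\big)\binom{a}{r-1}$ (which depends only on $k$ and $r$) can be absorbed for $n\ge c_{k,r}$.

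First I would treat the term $a=\floor{(k-1)/2}$, which is the one that should be tight. Here the slope is $\binom{\floor{(k-1)/2}}{r-1}$, and I would bound this crudely by $\frac{(\floor{(k-1)/2})^{r-1}}{(r-1)!} \le \frac{((k-1)/2)^{r-1}}{(r-1)!} \le \frac{k^{r-1}}{2^{r-1}(r-1)!}$. Strictly this gives $n\binom{\floor{(k-1)/2}}{r-1} \le \frac{k^{r-1}n}{2^{r-1}(r-1)!}$ only up to the lower-order discrepancy between $\binom{m}{r-1}$ and $m^{r-1}/(r-1)!$, so I would actually want the slightly stronger statement that $\binom{\floor{(k-1)/2}}{r-1} \le \frac{k^{r-1}}{2^{r-1}(r-1)!} - \delta_k$ for some positive $\delta_k$ bounded away from $0$ as $n\to\infty$ (which holds since $\binom{m}{r-1} = \frac{m^{r-1}}{(r-1)!}(1 + O(1/m))$ and $\floor{(k-1)/2} < k/2$ strictly). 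That surplus $\delta_k n$ then dominates the bounded additive term $\binom{k-a}{r}$ for $n$ large, giving the claim for the first term. For the term $a=1$, we have $f_r(n,k,1) = \binom{k-1}{r} + (n-k+1)\binom{1}{r-1}$, which equals $\binom{k-1}{r}$ when $r\ge 2$ (since $\binom{1}{r-1}=0$ for $r\ge 3$ and $=n-k+1$ only when $r=2$); in any case this is at most linear in $n$ with slope at most $1 \ll \frac{k^{r-1}}{2^{r-1}(r-1)!}$ for $k$ large, so it is dwarfed by the first term and poses no difficulty.

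Having bounded both terms, I take $c_{k,r}$ large enough that the bounded additive constants are absorbed by the linear surplus $\delta_k n$, and conclude $N_r(G) \le \max\{f_r(n,k,\floor{(k-1)/2}), f_r(n,k,1)\} \le \frac{k^{r-1}n}{2^{r-1}(r-1)!}$ whenever $n\ge c_{k,r}$. The only mild subtlety — the "main obstacle", though it is minor — is making sure the floor in $\floor{(k-1)/2}$ genuinely produces a constant-factor gap below $k/2$ rather than merely $\le k/2$, so that the positive surplus $\delta_k n$ really is there to soak up the $O_{k,r}(1)$ error terms; once that is observed the rest is a routine estimate. One should also note that Theorem~\ref{luo1} requires $n-1\ge k\ge 4$, which is automatically satisfied for $n\ge c_{k,r}$ with $c_{k,r}$ chosen large and $k$ in the asymptotic regime of interest.
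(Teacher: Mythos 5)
Your proof is correct and follows essentially the same route as the paper: apply Theorem~\ref{luo1} with $s=r$, observe that for $r\ge 3$ the $a=1$ term is a constant while the $a=\floor{(k-1)/2}$ term has slope $\binom{\floor{(k-1)/2}}{r-1}$ strictly below $\binom{k/2}{r-1}\le \frac{k^{r-1}}{2^{r-1}(r-1)!}$, and let the linear surplus absorb the $O_{k,r}(1)$ additive terms for $n\ge c_{k,r}$. Your write-up is just a more explicit version of the paper's one-line computation (including the correct observation that the strict gap coming from the floor is what soaks up the constant terms).
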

\begin{proof}
From Theorem \ref{luo1}, it follows that for large enough $n$, the number of $r$-cliques is at most  
\begin{displaymath}
\left(n - \floor{\frac{k-1}{2}}\right)\binom{\floor{\frac{k-1}{2}}}{r-1} + \binom{\floor{\frac{k-1}{2}}}{r} + \binom{\floor{\frac{k-1}{2}}}{r-2} < n\binom{\frac{k}{2}}{r-1}.\qedhere\end{displaymath} 
\end{proof}

Given an $r$-uniform hypergraph $\h$ we define the shadow graph of $\h$, denoted $\partial\h$ to be the graph on the same vertex set with edge set:
\begin{displaymath}
E(\partial\h) := \{\{x,y\}:\{x,y\} \subset e \in E(\h)\}.
\end{displaymath}

\begin{definition}
If $r=3$, then we call an edge $e \in E(\partial\h)$ \emph{fat} if there are at least 2 distinct hyperedges $h_1,h_2$ with $e \subset h_1,h_2$.  If $r>3$, then we call an edge $e \in E(\partial\h)$ \emph{fat} if there are at least $k$ distinct hyperedges $h_1,h_2,\dots,h_k$ in $\h$ with $e \subset h_i$ for $1 \le i \le k$. 

\vspace{1mm}

\noindent We call an edge $e \in E(\partial\h)$ \emph{thin} if it is not fat.
\end{definition}
  Thus, the set $E(\partial\h)$ decomposes into the set of fat edges and the set of thin edges.   We will refer to the graph whose edges consist of all fat edges in $\partial\h$ as the \emph{fat graph} and denote it by $F$.   

\begin{lemma}\label{fatlem}
There is no $P_k$ in the fat graph $F$ of the hypergraph $\h$.  
\end{lemma}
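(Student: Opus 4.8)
The plan is to argue by contradiction. Suppose the fat graph $F$ contains a path $P_k$; I will exhibit a Berge $P_k$ in $\h$ itself, contradicting the hypothesis on $\h$. Write the vertices of this path as $v_1,v_2,\dots,v_{k+1}$ and its edges as $e_i=\{v_i,v_{i+1}\}$ for $1\le i\le k$, so each $e_i$ is a fat edge of $\partial\h$. It suffices to choose pairwise distinct hyperedges $h_1,\dots,h_k\in E(\h)$ with $e_i\subset h_i$ for every $i$, since then $v_1,\dots,v_{k+1}$ together with $h_1,\dots,h_k$ form a Berge path of length $k$.

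I would pick $h_1,h_2,\dots,h_k$ greedily in this order. The one structural fact I use is that whenever $|i-j|\ge 2$ the path edges $e_i$ and $e_j$ are vertex-disjoint (they involve the four distinct path vertices $v_i,v_{i+1},v_j,v_{j+1}$), so $e_i\cup e_j$ is a $4$-element set and cannot be contained in any hyperedge when $r=3$; only consecutive path edges can share a vertex.

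\emph{Case $r>3$.} When it is time to choose $h_i$ we have committed to at most $i-1\le k-1$ hyperedges, whereas $e_i$ is fat and hence lies in at least $k$ hyperedges of $\h$; pick for $h_i$ any hyperedge containing $e_i$ that has not yet been used. All chosen hyperedges are then distinct, and we are done. \emph{Case $r=3$.} Here fatness only guarantees two hyperedges through $e_i$, so we cannot always avoid every earlier choice, but we only need to avoid $h_{i-1}$: for $j\le i-2$, the hyperedge $h_j$ contains $e_j$, which is disjoint from $e_i$, so the $4$-element set $e_i\cup e_j$ is not contained in the $3$-element set $h_j$; thus $h_j$ does not contain $e_i$, whence any hyperedge we select for $h_i$ automatically differs from $h_j$. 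So it is enough to choose $h_i$ among the (at least two) hyperedges containing $e_i$ with $h_i\ne h_{i-1}$, which is always possible. Again all $h_i$ are distinct, producing the desired Berge $P_k$.

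I expect the only subtlety to be the $r=3$ case: one must notice that possible repetitions of hyperedges are confined to pairs of consecutive path edges, so that a single-step look-back in the greedy selection already suffices even though fatness supplies merely two hyperedges per edge. For $r>3$ the threshold $k$ built into the definition of a fat edge makes the argument a trivial counting argument with no real obstacle; this asymmetry is precisely why the definition of ``fat'' is split according to whether $r=3$ or $r>3$.
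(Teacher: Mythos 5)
Your proposal is correct and follows essentially the same route as the paper: a greedy selection of distinct hyperedges, using the count of at least $k$ hyperedges per fat edge when $r>3$, and for $r=3$ the observation that a $3$-element hyperedge can only contain two path edges if they are consecutive, so avoiding $h_{i-1}$ suffices. You merely spell out the disjointness justification in the $r=3$ case a bit more explicitly than the paper does.
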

\begin{proof}
Suppose we have such a $P_k$ with edges $e_1,e_2,\dots,e_k$.  For $r=3$, if a hyperedge contains two edges from the path, then it must contain consecutive edges $e_i,e_{i+1}$. Select hyperedges $h_1,h_2,\dots,h_k$ where $e_i \subset h_i$ in such a way that $h_{i+1}$ is different from $h_i$ for all $1 \le i \le k-1$, and these edges yield the required Berge path.

Suppose now that $r>3$, we will find a Berge path of length $k$ in $\h$, greedily.  For $e_1$, select an arbitrary hyperedge $h_1$ containing it.  Suppose we have found a distinct hyperedge $h_i$ containing the fat edge $e_i$ for all $1 \le i < i^*$.  Since the edge $e_{i^*}$ is fat, there are at least $k$ different hyperedges $h_{i^*}^1,h_{i^*}^2,\dots,h_{i^*}^k$ containing it.  Select one of them, say $h_{i^*}^j$, which is not equal to any of $h_1,h_2,\dots,h_{i^*-1}$.  Thus, we may find distinct hyperedges $h_1,h_2,\dots,h_k$ where $e_i \subset h_i$ for $1 \le i \le k$, and thus, we have a Berge path of length $k$.
\end{proof}
We call a hyperedge $h \in E(\h)$ \emph{fat} if $h$ contains no thin edge. Let $\F$ denote the hypergraph on the same set of vertices as $\h$ consisting of the fat hyperedges, then

\begin{lemma}
\label{HminusF}
If $r=3$, then
\begin{displaymath}
\abs{E(\h \setminus \F)} \le \frac{(k-1)n}{2}.
\end{displaymath}
If $r>3$, then
\begin{displaymath}
\abs{E(\h \setminus \F)} \le \frac{(k-1)^2n}{2}.
\end{displaymath}
\end{lemma}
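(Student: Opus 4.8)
The plan is to reduce the statement to the graph Erd\H{o}s--Gallai bound (Theorem~\ref{EGpaths}) via the shadow, using the two features already in place: by the definition of a fat hyperedge, every hyperedge of $\h\setminus\F$ contains a thin edge, and by the definition of a thin edge it lies in only a few hyperedges. First I would fix, for each $h\in E(\h\setminus\F)$, one thin edge $t(h)\subseteq h$; this gives a map $t\colon E(\h\setminus\F)\to E(\partial\h)$ whose image consists of thin edges only. Let $T$ denote the graph on $V(\h)$ whose edge set is $\{t(h):h\in E(\h\setminus\F)\}$.

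The crucial step is to check that $T$ contains no $P_k$. Suppose it did, with basic vertices $v_1,\dots,v_{k+1}$ and edges $f_i=\{v_i,v_{i+1}\}$ for $1\le i\le k$. Choosing for each $i$ a hyperedge $h_i\in E(\h\setminus\F)$ with $t(h_i)=f_i$, the $h_i$ are pairwise distinct simply because the $f_i$ are pairwise distinct and $t$ is a function, while $f_i\subseteq h_i$ gives $v_i,v_{i+1}\in h_i$; hence $v_1,\dots,v_{k+1}$ together with $h_1,\dots,h_k$ form a Berge path of length $k$ in $\h$, contradicting the hypothesis. (This is a little cleaner than the argument in Lemma~\ref{fatlem}: here the fact that $t$ is a function already forces the chosen hyperedges to be distinct, so no case distinction on consecutive edges is needed.) Since $T$ is $P_k$-free and has $n$ vertices, Theorem~\ref{EGpaths} yields $\abs{E(T)}\le\frac{(k-1)n}{2}$.

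It then remains to count the fibres of $t$. If $r=3$, a thin edge is contained in exactly one hyperedge, so $t$ is injective and $\abs{E(\h\setminus\F)}=\abs{E(T)}\le\frac{(k-1)n}{2}$. If $r>3$, a thin edge is contained in at most $k-1$ hyperedges, so every fibre of $t$ has size at most $k-1$, and therefore $\abs{E(\h\setminus\F)}\le(k-1)\abs{E(T)}\le\frac{(k-1)^2 n}{2}$. I do not anticipate a genuine obstacle in this argument; the one point that needs care is the verification that the pulled-back configuration really is a Berge path, i.e.\ that its $k+1$ vertices are distinct (immediate, since they are the vertices of a path) and that its $k$ hyperedges are distinct (which, as noted, is automatic because $t$ is a function). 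Everything else is a direct invocation of Erd\H{o}s--Gallai and an elementary bound on fibre sizes.
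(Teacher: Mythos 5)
Your proposal is correct and is essentially the paper's own proof: the paper likewise selects one thin edge from each hyperedge of $\h\setminus\F$, notes that each selected edge is chosen at most once (for $r=3$) or at most $k-1$ times (for $r>3$), observes that the resulting graph is $P_k$-free because a $P_k$ there would lift to a Berge $P_k$ in $\h$, and applies Theorem~\ref{EGpaths}. Your only addition is to make explicit, via the function $t$, why the lifted hyperedges are distinct, which the paper leaves implicit.
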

\begin{proof}
Arbitrarily select a thin edge from each $h \in \h \setminus \F$.  Let $G$ be the graph consisting of the selected thin edges.  We know that each edge in $G$ was selected at most once if $r=3$ and at most $k-1$ times in the $r>3$.  Thus, we have that $\abs{\h \setminus \F} \le \abs{E(G)}$ for $r=3$ and $\abs{\h \setminus \F} \le (k-1)\abs{E(G)}$ for $r>3$.  Moreover, $G$ is $P_k$-free since a $P_k$ in $G$ would imply a Berge $P_k$ in $\h$ by considering any hyperedge from which each edge was selected.  It follows by Theorem \ref{EGpaths} that $\abs{E(G)} \le \frac{(k-1)n}{2}$, so $\abs{\h \setminus \F} \le \frac{(k-1)n}{2}$ if $r=3$, and  $\abs{\h \setminus \F} \le \frac{(k-1)^2n}{2}$ if $r>3$.
\end{proof}

Any hyperedge of $\F$ contains only fat edges, so it corresponds to a unique $r$-clique in $F$. This implies the following.

\begin{observation}
\label{cliques_in_fat_graph}
The number of hyperedges in $E(\F)$ is at most the number of $r$-cliques in the fat graph $F$.  
\end{observation}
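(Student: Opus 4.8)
The plan is to set up an injection from the fat hyperedges of $\h$ into the set of $r$-cliques of the fat graph $F$, after which the stated bound follows simply by comparing the sizes of these two sets.

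First I would unpack the definitions. By construction of $\partial\h$, whenever $\{x,y\}\subseteq h$ for some $h\in E(\h)$ the pair $\{x,y\}$ is an edge of $\partial\h$; hence every one of the $\binom{r}{2}$ pairs contained in a hyperedge already lies in $\partial\h$. A hyperedge $h$ being \emph{fat} means that none of these pairs is thin, i.e. every pair inside $h$ is a fat edge and therefore an edge of $F$. Consequently, for each fat hyperedge $h=\{v_1,\dots,v_r\}$ the vertex set $V(h)$ spans a complete subgraph on $r$ vertices in $F$, that is, an $r$-clique; I would then define $\phi(h):=V(h)$, a map from $E(\F)$ to the set of $r$-cliques of $F$.

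The only point to verify is that $\phi$ is injective, and this is immediate: an $r$-uniform hyperedge is determined by its $r$-element vertex set, and an $r$-clique is likewise determined by its $r$-element vertex set, so $\phi(h)=\phi(h')$ forces $V(h)=V(h')$ and hence $h=h'$. Therefore $\abs{E(\F)}\le N_r(F)$, the number of $r$-cliques in $F$. I do not anticipate any genuine obstacle here; the argument is purely definitional, and the only thing to stay alert to is that "fat hyperedge" is strictly stronger than "every pair of $h$ lies in $\partial\h$" — it additionally requires each such pair to be fat — which is exactly what the definition supplies. The real difficulty of the paper lies downstream: Lemma \ref{fatlem} shows $F$ is $P_k$-free, so Corollary \ref{Luocor} bounds $N_r(F)$ above by $\frac{k^{r-1}n}{2^{r-1}(r-1)!}$ for large $n$, and combining this with the bound on $\abs{E(\h\setminus\F)}$ from Lemma \ref{HminusF} yields the upper bound in Theorem \ref{main}.
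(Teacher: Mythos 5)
Your proposal is correct and matches the paper's argument, which is just the one-line remark that a fat hyperedge contains only fat edges and hence corresponds to a unique $r$-clique in $F$; you have simply spelled out the injection $h\mapsto V(h)$ explicitly. Nothing further is needed.
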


To this end we will upper bound the number of $r$-cliques in $F$, by making use of the following important lemma.
\begin{lemma}
\label{no_disjoint_cycles}
There are no two disjoint cycles of length at least $k/2+1$ in the fat graph $F$.  
\end{lemma}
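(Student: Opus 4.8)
\emph{Proof plan.} Suppose for contradiction that the fat graph $F$ contains two vertex-disjoint cycles $C_1$ and $C_2$, each of length at least $k/2+1$. I will produce a Berge path of length $k$ in $\h$, contradicting its Berge $P_k$-freeness. The point is that the two cycles give two long simple paths in the shadow $\partial\h$ built entirely from fat edges, that connectivity of $\h$ supplies a Berge path linking them, and that fatness lets one splice everything into a single Berge path.

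First, by connectivity of $\h$, choose a Berge path from a vertex of $C_1$ to a vertex of $C_2$ and pass to a minimal sub-path: this yields a Berge path $Q$, of length $s\ge 1$, from some $p\in V(C_1)$ to some $q\in V(C_2)$, with distinct hyperedges $g_1,\dots,g_s$ and with all basic vertices other than $p$ and $q$ lying outside $V(C_1)\cup V(C_2)$. Let $P_1$ be a Hamiltonian path of $C_1$ ending at $p$ (delete an edge of $C_1$ incident to $p$) and $P_2$ a Hamiltonian path of $C_2$ beginning at $q$. Concatenating $P_1$, $Q$, $P_2$ gives a walk through $|C_1|+|C_2|+(s-1)$ pairwise distinct vertices --- the sets $V(C_1)$, $V(C_2)$ and the interior of $Q$ being pairwise disjoint --- hence a simple path $P$ in $\partial\h$ of length $(|C_1|-1)+s+(|C_2|-1)\ge k+1$. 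Let $P'$ be the path formed by the first $k$ edges of $P$; each edge of $P'$ is either an edge of $Q$, carrying a prescribed hyperedge among $g_1,\dots,g_s$, or a fat edge coming from $C_1$ or $C_2$.

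It remains to assign distinct hyperedges to the $k$ edges of $P'$, each containing its edge; this is precisely a Berge $P_k$. The $Q$-edges keep their $g_i$'s, which are distinct. If $r>3$, every fat edge lies in at least $k$ hyperedges, so processing the fat edges one at a time we can always avoid the at most $k-1$ hyperedges already used. If $r=3$, a fat edge lies only in at least two hyperedges, but since $P'$ is a simple path, a $3$-element hyperedge containing two edges of $P'$ must contain two consecutive ones; hence any two edges of $P'$ with the same assigned hyperedge are consecutive in $P'$, so it suffices that consecutive edges of $P'$ receive distinct hyperedges. This holds among the $Q$-edges, and by assigning the fat edges of $P'$ in the order moving away from the $Q$-block, each fat edge need avoid only the single hyperedge already assigned to its neighbour in $P'$, which is possible as it lies in at least two. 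Either way we obtain a Berge path of length $k$ in $\h$, a contradiction.

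The step I expect to be most delicate is the $r=3$ hyperedge assignment: with only two hyperedges guaranteed per fat edge there is no slack for crude counting, so one must lean on the structural fact that a $3$-set cannot cover two non-consecutive edges of a simple path, and order the greedy choices so that the constraint coming from the $g_i$'s at the (at most two) junctions with the $Q$-block is met first.
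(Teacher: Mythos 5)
Your proof is correct and follows the same strategy as the paper: use connectivity to find a minimal Berge path linking the two cycles, open each cycle into a long path at the junction vertices, and use fatness of the cycle edges to assign distinct hyperedges to the resulting $k$ shadow edges. The paper compresses the splicing step into one sentence; your careful treatment of the hyperedge assignment (in particular the observation that for $r=3$ a hyperedge can only cover consecutive edges of a simple path, mirroring the argument in Lemma~\ref{fatlem}) is exactly the detail it omits.
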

\begin{proof}
Let $C$ and $D$ be two such cycles.  By connectivity, there are vertices $v \in V(C)$ and $w \in V(D)$ and a Berge path from $v$ to $w$ in $\h$ containing no additional vertices of $C$ or $D$ as defining vertices.  This path can be extended using the hyperedges containing the edges of $C$ and $D$ to produce a Berge path of length $k$ in $\h$ (note that here we used that the edges of $C$ and $D$ are fat), a contradiction.
\end{proof}
Assume that $F$ has connected components $C_1,C_2,\dots,C_t$.  Trivially,
\begin{equation}
\label{sum_components}
N_r(F) = \sum_{i=1}^t N_r(C_i).
\end{equation}

\noindent
If $\abs{V(C_i)} \le k/2$, then trivially $$N_r(C_i) \le \binom{\abs{V(C_i)}}{r} \le \frac{\abs{V(C_i)}^r}{r!} \le \frac{k^{r-1} \abs{V(C_i)}}{2^{r-1}(r-1)!}.$$ So we can assume $\abs{V(C_i)} \ge k/2$. By Lemma \ref{no_disjoint_cycles}, we have that for all but at most one $i$, $C_i$ does not contain a cycle of length at least $k/2+1$. So by  Corollary \ref{luo2}, for all but at most one $i$, say $i_0$, we have $$N_r(C_i) \le \frac{\abs{V(C_i)}-1}{k/2-2}\binom{k/2-1}{r} \le \frac{k^{r-1} \abs{V(C_i)}}{2^{r-1}(r-1)!} + O(k^{r-2}).$$ If $\abs{V(C_{i_0})} \ge c_{k,r}$, then by Lemma \ref{fatlem} and by Corollary \ref{Luocor} we have $$N_r(C_{i_0}) \le \frac{k^{r-1} \abs{V(C_i)}}{2^{r-1}(r-1)!}.$$ Otherwise, $N_r(C_{i_0}) \le \binom{\abs{V(C_{i_0})}}{r} = o(n)$. Therefore, by \eqref{sum_components}, we have $$N_r(F) = \sum_{i=1}^t N_r(C_i) \le$$ $$ \le \sum_{i=1}^t\left(\frac{k^{r-1} \abs{V(C_i)}}{2^{r-1}(r-1)!} + O(k^{r-2})\right) + o(n) \le \frac{k^{r-1} n}{2^{r-1}(r-1)!} + O(k^{r-2})n + o(n).$$

Therefore, by Observation \ref{cliques_in_fat_graph}, 
\begin{equation}
\label{boundonscriptF}
\abs{E(\F)} \le N_r(F) \le \frac{k^{r-1} n}{2^{r-1}(r-1)!} + O(k^{r-2})n + o(n).
\end{equation}

Since $\abs{E(\h)} = \abs{E(\h \setminus \F)} + \abs{E(\F)}$, adding up the upper bounds in \eqref{boundonscriptF} and Lemma \ref{HminusF}, we obtain 
the desired upper bound on $\abs{E(\h)}$.

\qed


\section*{Acknowledgements}

The authors Gy\H{o}ri, Methuku, Salia and Tompkins were supported by the National Research, Development and Innovation Office -- NKFIH under the grant K116769.

\noindent
The author Vizer was supported by the Hungarian National Research, Development and Innovation Office -- NKFIH under the grant SNN 116095.

\bibliography{OrderMag.bib}

\end{document}